

\documentclass{amsart}

\usepackage{amsmath,amssymb,amsthm,framed,graphicx,latexsym,youngtab,epstopdf,booktabs}
\usepackage{color,fourier}

\usepackage{ array }
\usepackage{ enumitem }
\usepackage{ url }

\theoremstyle{definition}

\newtheorem{definition}{Definition}[section]

\newtheorem{example}[definition]{Example}

\theoremstyle{plain}

\newtheorem{lemma}[definition]{Lemma}
\newtheorem{proposition}[definition]{Proposition}
\newtheorem{theorem}[definition]{Theorem}

\allowdisplaybreaks

\def\calA{\mathcal{A}}
\def\calB{\mathcal{B}}

\def\calD{\mathcal{D}}

\def\calO{\mathcal{O}}
\def\calP{\mathcal{P}}
\def\calQ{\mathcal{Q}}
\def\calR{\mathcal{R}}
\def\calS{\mathcal{S}}
\def\calT{\mathcal{T}}

\def\calX{\mathcal{X}}
\def\calY{\mathcal{Y}}

\DeclareMathOperator{\Com}{\textsl{Com}}
\DeclareMathOperator{\Lie}{\textsl{Lie}}
\DeclareMathOperator{\Ass}{\textsl{Ass}}
\DeclareMathOperator{\Poisson}{\textsl{Poisson}}
\DeclareMathOperator{\NLie}{\textsl{NLie}}

\DeclareMathAlphabet{\mathbbold}{U}{bbold}{m}{n}

\def\k{\mathbbold{k}}

\begin{document}


\title{Distributive laws between the operads Lie and Com}

\author{Murray Bremner}

\address{Department of Mathematics and Statistics, University of Saskatchewan, 
Room 142 McLean Hall, 106 Wiggins Road, Saskatoon, Saskatchewan, Canada, S7N 5E6}

\email{bremner@math.usask.ca}

\author{Vladimir Dotsenko}

\address{Institut de Recherche Math\'ematique Avanc\'ee, UMR 7501, Universit\'e de Strasbourg et CNRS, 7 rue Ren\'e-Descartes, 67000 Strasbourg, France}
\email{vdotsenko@unistra.fr}

\subjclass[2010]{Primary 18D50. 
Secondary 13C10, 13N15, 13P10, 15A54, 15A69, 17A30, 17A50, 17B60, 17B63, 68W30}



\keywords{Algebraic operads, distributive laws, Poisson algebras, deformations of algebras, Lie algebras, commutative associative algebras,
free modules over polynomial rings, Gr\"obner bases, computer algebra}

\thanks{The research of the first author was supported by a Discovery Grant from NSERC, 
the Natural Sciences and Engineering Research Council of Canada.}

\begin{abstract}
Using methods of computer algebra, especially Gr\"obner bases for submodules of free modules over polynomial rings, we solve a classification problem in theory of algebraic operads: we show that the only nontrivial (possibly inhomogeneous) distributive law between the operad of Lie algebras and the operad of commutative associative algebras is given by the Livernet--Loday formula deforming the Poisson operad into the associative operad.
\end{abstract}

\maketitle


\section{Introduction}

The most classical examples of algebraic operads, or the ``three graces of operad theory'' (an expression coined by Jean-Louis Loday), are the operad $\Lie$ controlling Lie algebras, the operad $\Com$ controlling associative commutative algebras, and the operad $\Ass$ controlling associative algebras that are not necessarily commutative. The relationship between these operads can be described informally by an exact sequence
 \[
0\to\Lie\hookrightarrow\Ass\twoheadrightarrow\Com\to 0 ,
 \]
meaning that the suboperad of $\Ass$ generated by the operation $[a_1,a_2]=a_1\cdot a_2-a_2\cdot a_1$ is isomorphic to $\Lie$, and the quotient by the ideal generated by $\Lie$ is isomorphic to $\Com$. The same is true if the operad $\Ass$ is replaced by the operad $\Poisson$ of Poisson algebras.

In fact, the underlying $\mathbb{S}$-module for both the operad $\Ass$ and the operad $\Poisson$ is obtained from those for the operads $\Com$ and $\Lie$ by the composite product: on the level of $\mathbb{S}$-modules, we have
 \[
\Ass\cong\Poisson\cong\Com\circ\Lie .
 \]
A well known observation due to Livernet and Loday (see e.g. \cite{MR2006}) is that over a field of zero characteristic there exists a one-parametric family of operads $\calO_q$ for which $\calO_0\cong\Poisson$ and $\calO_1\cong\Ass$; moreover, if all elements of the ground field are perfect squares, one has $\calO_q\cong\Ass$ for $q\ne 0$. 
In the case of associative algebras, situations like that are often studied in the context of the ``Koszul deformation principle'' of Drinfeld \cite{PP}. A full analogue of this result for operads is not available, but the universe of inhomogeneous distributive laws provides examples where a version of the Koszul deformation principle holds. For associative algebras, many related interesting examples for the Koszul deformation principle in the case of associative algebras are (noncommutative) algebras that deform polynomial algebras in some sense, e.g. admit a basis of standard monomials $x_1^{a_1}x_2^{a_2}\cdots x_n^{a_n}$. Among such relations, an important particular case is the so called PBW case, where representing elements of an algebra as a linear combinations of standard monomials can be done by term rewriting. One natural generalisation to the case of operads is obtained by looking at defining relations that produce operads whose underlying analytic functors are compositions of the underlying analytic functors of the given operads, via appropriate rewriting rules that we call inhomogeneous distributive laws; those generalise the notion of a distributive law between two operads studied by Markl~\cite{Markl1996}, a particular case of distributive laws between monads going back to the work of Beck \cite{Beck1969}. Our inhomogeneous distributive laws, while still resembling those of Beck, do not fit into that framework, as we allow the relations of one of the operads to be deformed; what is preserved is the free right module structure over the other operad, fitting into the general framework for Poincar\'e--Birkhoff--Witt theorems of the second author and Tamaroff \cite{DT2018}.

In this paper, we use computational commutative algebra to classify all inhomogeneous distributive laws between $\Com$ and $\Lie$, thus answering a question which the second author was asked in private communications with Vladimir Hinich and Jean-Louis Loday. Our main result is that the family $\calO_q$ mentioned above is the only instance of an inhomogeneous distributive law between the Lie operad and the commutative operad (except for one trivial distributive law that always exists). This generalises the classification of homogeneous distributive laws obtained by the first author and Markl \cite{BM2018}. Pedro Tamaroff informed us that in his work in progress, he proves that the associative operad is the only deformation of the Poisson operad; this offers additional supporting evidence for our result. A short summary of our result was presented at the 2019 Maple conference \cite{BD2019}.


\section{Preliminaries}

Throughout this paper we work over an arbitrary field $\k$ of characteristic $0$. 
The main question resolved in this paper concerns symmetric operads, so we focus our attention on them. The general definitions discussed in Section \ref{sec:inhomogen} are, however, equally valid for nonsymmetric or shuffle operads \cite{BD2016,LV2012}. The composite product of symmetric sequences (for which operads are monoids) is denoted $\circ$, and the unit for that product is denoted~$\mathbb{I}$.

We require all operads in this paper to be (nonnegatively) weight graded: this means that every component $\calP(n)$ admits a direct sum
decomposition $\calP(n)=\bigoplus_{k\ge 0}\calP(n)_{(k)}$ into components of weight $k$ for various $k\ge 0$ for which any operad composition of homogeneous elements of certain weights is a homogeneous element whose weight is the sum of the weights. In addition, we assume that all operads are reduced ($\calP(0)=0$) and connected ($\calP(1)_{(0)}=\k$ and $\calP(n)_{(0)}=0$ for $n>1$). A connected operad is automatically augmented, and we denote by $\overline{\calP}$ the augmentation ideal of $\calP$. Finally, we assume that all individual weight graded components $\calP(n)_{(k)}$ are finite-dimensional. 

Let us remark that each operad $\calP$ has one obvious weight grading where
 \[
\calP(n)_{(n-1)}=\calP(n) \text{ for } n \geq 1  \text{ and }
 \calP(n)_{(k)} =0 \text{ otherwise }.
 \]
For most commonly considered operads, those generated by binary operations subject to ternary relations, this grading is the most convenient one to use. However, beyond the binary generated operads, other weight gradings are occasionally more appropriate. The set-up we adopt throughout this article is that of \emph{standard} grading: we consider operads $\calP$ that are generated by elements of weight one. In this case every operad $\calP$ admits a \emph{standard presentation} $\calP\cong\calT(\calX)/(\calR)$, where $\calX:=\calP_{(1)}$ and $\calR$ is the minimal set of relations satisfied by $\calP_{(1)}$.

We shall frequently use the \emph{infinitesimal composition of collections}. For two collections $\calA$ and $\calB$, their infinitesimal composition $\calA\circ'\calB$ is the subcollection of the collection $\calA\circ(\calB\oplus\mathbb{I})$ spanned by tensors where elements of $\calB$ occur exactly once. It is defined for each of the three composite products of collections: nonsymmetric, shuffle, and symmetric. 


\section{Inhomogeneous distributive laws}\label{sec:inhomogen}

Suppose that $\calP$ and $\calQ$ are operads, and assume that $\calO$ is an operad which satisfies two properties:
\begin{itemize}
\item there is an injective map of operads $\jmath\colon\calQ\to \calO$,
\item the quotient operad $\calO/(\jmath(\overline{\calQ}))$ is isomorphic to $\calP$. 
\end{itemize}
Choose a splitting $\imath\colon\calP\to\calO$ on the level of weight graded symmetric sequences. We have a sequence of obvious maps of symmetric sequences
 \[
\calP\circ\calQ\hookrightarrow \calT(\calP\oplus\calQ)\to\calT(\imath(\calP)+\jmath(\calQ))\to\calT(\calO)\to\calO ;
 \]
Let us denote by $\eta\colon\calP\circ\calQ\to\calO$ the composite map. 

\begin{definition}
We say that the operad $\calO$ is obtained from $\calP$ and $\calQ$ by an \emph{inhomogeneous distributive rewriting rule} if the map $\eta$ is a surjection.
We say that the operad $\calO$ is obtained from $\calP$ and $\calQ$ by an \emph{inhomogeneous distributive law} if that map is an isomorphism.
\end{definition}

We remark that the notions of a distributive rewriting rule and a distributive law are well defined, i.e. they do not depend on the choice of the splitting $\imath$. Indeed, suppose that $\imath$ is a splitting for which the map $\eta_{\imath,\jmath}$ is surjective. Then every other splitting $\imath'$ differs from $\imath$ by some elements from $\imath(\calP)\circ\jmath(\calQ)$ that vanish in the quotient $\calO/(\jmath(\overline{\calQ}))$, and surjectivity is proved by easy induction on weight. Also, since we only work with operads with finite-dimensional weight graded components, surjectivity for $\eta'$ together with isomorphism for $\eta$ imply isomorphism for $\eta'$.

The way the notion of an inhomogeneous distributive law is defined makes sense conceptually but is difficult to check directly. Let us present an equivalent approach which is more user-friendly.  Let $\calP=\calT(\calX)/(\calR)$ and $\calQ=\calT(\calY)/(\calS)$ be two weight graded operads presented by generators and relations. An operad $\calO$ generated by $\calX\oplus\calY$ is obtained from $\calP$ and $\calQ$ by an \emph{inhomogeneous distributive rewriting rule} if the defining relations of $\calO$ are $\widetilde{\calR}\oplus\calD\oplus\calS$, with subcollections $\widetilde{\calR}$ and $\calD$ of the free operad $\calT(\calX\oplus\calY)$ satisfying two constraints. The first constraint postulates that there should exist a map of weight graded collections $\rho\colon \calR\to \calT(\calX)\circ\calT(\calY)\subset \calT(\calX\oplus\calY)$ such that the post-composition of $\rho$ with the projection $\calT(\calX\oplus\calY)\twoheadrightarrow \calT(\calX)$ is zero, and that the subcollection $\widetilde{\calR}$ consists of all elements of the form $r-\rho(r)$ with $r\in \calR$. The second constraint postulates that there should exist a map of weight graded collections $\lambda\colon \calY\circ'\calX\to \calT(\calX\oplus\calY)_{(2)}$ such that the postcomposition of $\lambda$ with the projection $\calT(\calX\oplus\calY)\twoheadrightarrow \calT(\calX)$ is zero and the subcollection $\calD$ consists of all elements $v-\lambda(v)$ with $v\in \calY\circ'\calX$. Note that this means that we have $\calO/(\calY)\cong\calP$; we choose some splitting $\alpha\colon\calP\to\calO$ on the level of weight graded collections, allowing us to define the maps
 \[
\calP \circ \calQ
\,\hookrightarrow\, 
\calT( \calP \oplus \calQ ) 
\,\rightarrow\,
\calT( \calO ) \to \calO.
 \]
Our definition of an inhomogeneous distributive rewriting rule ensures that the map $\eta$ obtained by composing these maps is a surjection on the level of underlying objects. An inhomogeneous distributive rewriting rule is said to be an \emph{inhomogeneous distributive law} if the map $\eta$ is an isomorphism on the level of underlying objects. 

For a particular case of quadratic operads, we recover precisely the definition of a filtered distributive law of \cite{DG2014}. The following result, proved analogously to \cite[Theorem 8.6.11]{LV2012}, provides a constructive approach to classification of inhomogeneous distributive laws. 

\begin{proposition}\label{prop:weight3}
Suppose that the operads $\calP$ and $\calQ$ are quadratic, and that the map $\eta$ is an isomorphism when restricted to elements of weight $3$. Then that map is an isomoprphism in all weights, so the operad $\calO$ is obtained from $\calP$ and $\calQ$ by an inhomogeneous distributive law.
\end{proposition}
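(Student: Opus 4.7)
The plan is to reduce the claim to the classical weight-three criterion \cite[Theorem~8.6.11]{LV2012} for homogeneous distributive laws between quadratic operads by passing to a suitable associated graded operad. Equip the free operad $\calT(\calX\oplus\calY)$ with the ascending filtration $F_p$ spanned by trees having at most $p$ internal vertices labelled by $\calX$; this filtration is compatible with operadic composition and descends to $\calO$. The hypotheses on $\rho$ and $\lambda$ pin down the highest-$\calX$-weight symbols of the defining relations: for $\widetilde{\calR}\ni r-\rho(r)$, the element $r$ has $\calX$-weight $2$ while $\rho(r)\in\calT(\calX)\circ\calT(\calY)$ has $\calX$-weight at most $1$, since its projection to $\calT(\calX)$ vanishes; for $\calD\ni v-\lambda(v)$, the element $v\in\calY\circ'\calX$ has $\calX$-weight $1$ while $\lambda(v)\in\calT(\calX\oplus\calY)_{(2)}$ has $\calX$-weight at most $1$ by the same projection condition; and $\calS\subset\calT(\calY)_{(2)}$ is entirely of $\calX$-weight $0$.

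Taking highest-$\calX$-weight symbols therefore produces a presentation of $\operatorname{gr}\calO$ with relations $\calR\oplus\widetilde{\calD}\oplus\calS$, where $\widetilde{\calD}=\{v-\widetilde\lambda(v)\}$ with $\widetilde\lambda\colon\calY\circ'\calX\to(\calX\circ'\calY)\oplus(\calY\circ'\calX)$. An elementary linear change of variables absorbs the $\calY\circ'\calX$-component of $\widetilde\lambda$ (which acts as an endomorphism of $\calY\circ'\calX$ whose invertibility is forced by the weight-$3$ hypothesis), leaving $\widetilde\lambda$ valued in $\calX\circ'\calY$ and exhibiting $\operatorname{gr}\calO$ as an operad obtained from $\calP$ and $\calQ$ by a homogeneous Markl-type distributive law. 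Since $\eta$ respects the $\calX$-weight filtration, the weight-$3$ isomorphism hypothesis transfers to the associated graded, so $\eta_{\operatorname{gr}}\colon\calP\circ\calQ\to\operatorname{gr}\calO$ is an isomorphism in weight $3$; by the classical quadratic criterion it is then an isomorphism in every weight. Combining this with finite-dimensionality of graded components, the identity $\dim\calO(n)_{(k)}=\dim\operatorname{gr}\calO(n)_{(k)}$, and the known surjectivity of $\eta$ yields that $\eta$ itself is an isomorphism.

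The delicate point is verifying that the initial ideal of the presentation of $\calO$ is generated precisely by the highest-$\calX$-weight symbols of $\widetilde{\calR}$, $\calD$, and $\calS$, with no parasitic lower-order relations appearing in $\operatorname{gr}\calO$. Equivalently, every S-polynomial-style reduction among the generators must, in weight $3$, close up to an identity already expressed through those highest-weight symbols. I expect this bookkeeping step, ensuring that the deformation term $\rho$ and the sub-leading components of $\lambda$ do not contaminate the associated graded, to be the main technical obstacle; the precise role of the weight-$3$ hypothesis is to preclude exactly such contamination, closing the diamond in the same spirit as the homogeneous criterion.
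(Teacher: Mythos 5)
The paper does not actually write out a proof of this proposition: it is stated as being ``proved analogously to \cite[Theorem 8.6.11]{LV2012}''. Your strategy --- filter $\calT(\calX\oplus\calY)$ and $\calO$ by $\calX$-weight, pass to the associated graded, and reduce to the homogeneous weight-three criterion --- is exactly the intended analogy; it is the same mechanism used for filtered distributive laws in the cited work of Dotsenko and Griffin \cite{DG2014}, of which this proposition is the inhomogeneous extension. So the approach is the right one, and your identification of the leading symbols of $\widetilde{\calR}$, $\calD$, $\calS$ is correct.

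As written, however, the argument has a gap exactly where you flag it, and the logic there is mildly circular: you apply the classical criterion to $\calP\circ\calQ\to\operatorname{gr}\calO$, but that criterion applies to the operad $\calO_0:=\calT(\calX\oplus\calY)/(\calR\oplus\widetilde{\calD}\oplus\calS)$ presented by the leading symbols, and identifying $\calO_0$ with $\operatorname{gr}\calO$ is precisely the ``no parasitic lower-order relations'' assertion you leave unproved. The fix is to reverse the direction of comparison rather than attack that assertion head-on. The initial ideal of $(\widetilde{\calR}\oplus\calD\oplus\calS)$ always \emph{contains} the ideal generated by the initial forms of the generators, so one has a canonical surjection $\calO_0\twoheadrightarrow\operatorname{gr}\calO$ for free, preceded by the canonical surjection $\calP\circ\calQ\twoheadrightarrow\calO_0$ attached to the homogeneous rewriting rule $\widetilde{\lambda}$. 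The weight-$3$ hypothesis gives $\dim(\calP\circ\calQ)_{(3)}(n)=\dim\calO_{(3)}(n)=\dim(\operatorname{gr}\calO)_{(3)}(n)$, so both surjections are isomorphisms in weight $3$; the homogeneous Diamond Lemma then yields $\calP\circ\calQ\cong\calO_0$ in all weights, and the resulting inequalities $\dim(\calP\circ\calQ)_{(k)}(n)=\dim(\calO_0)_{(k)}(n)\ge\dim(\operatorname{gr}\calO)_{(k)}(n)=\dim\calO_{(k)}(n)$, combined with the surjectivity of $\eta$, force equality everywhere. With this reordering no S-polynomial or diamond verification is required, and the ``delicate point'' dissolves. (Two minor remarks: the invertibility of $\id-\lambda_{\calY\circ'\calX}$ that you invoke is needed already for the weight-$2$ statement, and in the situation of the paper it is vacuous because $\lambda$ is required to land in $\calT(\calX)\circ\calT(\calY)$, which has no $\calY\circ'\calX$ component; and one should note that any homogeneous choice of the splitting $\imath$ makes $\eta$ a filtered map, so the dimension bookkeeping above is legitimate.)
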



\section{Main theorem}

\subsection{Parameters of the problem}

The operads we consider in this paper are $\Com$ and $\Lie$. The operad $\Com$ is generated by the symmetric sequence $\calX$ supported in arity $2$ whose component $\calX(2)$ is spanned by a commutative binary operation, which we denote by juxtaposition $a_1a_2$, and the relations $\calR$ are supported in arity $3$ and are given by the $\k S_3$-module generated by $(a_1 a_2) a_3 - (a_2 a_3) a_1$. The operad $\Lie$ is generated by the symmetric sequence $\calY$ supported in arity $2$ whose component $\calY(2)$ is spanned by an anti-commutative binary operation, which we denote by $[a_1,a_2]$, and the relations $\calS$ are supported in arity $3$ and are given by the $\mathbb{F}S_3$-module spanned by $[[a_1,a_2],a_3] - [[a_1,a_3],a_2] + [[a_2,a_3],a_1]$. 

Suppose that an operad $\calO$ is obtained from $\Com$ and $\Lie$ by an inhomogeneous distributive law. This operad is a quotient of $\calT(\calX\oplus\calY)$ by relations which, in this case, are all of weight $2$ and arity~$3$. The ternary component $\calT(\calX\oplus\calY)(3)$ has dimension $12$; for a basis we may choose the following operations:
\[
\begin{array}{l@{\quad}l@{\quad}l@{\quad}l@{\quad}l@{\quad}l}
(a_1a_2)a_3, &
(a_1a_3)a_2, &
(a_2a_3)a_1, &
[a_1a_2,a_3], &
[a_1a_3,a_2], &
[a_2a_3,a_1],
\\ {}
[a_1,a_2]a_3, &
[a_1,a_3]a_2, &
[a_2,a_3]a_1, &
[[a_1,a_2],a_3], &
[[a_1,a_3],a_2], &
[[a_2,a_3],a_1].
\end{array}
\]
From Section \ref{sec:inhomogen}, we know precisely the viable candidates for the relations of the operad~$\calO$; the general formulas simplify significantly because our operads have only binary operations and ternary relations: 
\begin{enumerate}[leftmargin=1.75em,label=(\alph*)]
\item The set of relations must contain $\calS$, which in our case means that we have to include 
\begin{equation}
\label{Jacobi}
[[a_1,a_2],a_3] - [[a_1,a_3],a_2] + [[a_2,a_3],a_1] .
\end{equation}
\item We should have a map of weight graded symmetric sequences
 \[
\lambda\colon \calY\circ'\calX\to \calT(\calX)\circ\calT(\calY)  
 \] 
constrained by requiring that under the canonical projection $\calT(\calX)\circ\calT(\calY)\twoheadrightarrow\calT(\calX)$ the image of $\lambda$ is sent to zero. The latter condition, together with the weight grading requirement, means that in fact we have a map 
 \[
\lambda\colon \calY\circ'\calX\to \calX\circ'\calY\oplus\calY\circ'\calY ,  
 \] 
and once we account for the symmetry of $[a_1a_2,a_3]$ under the transposition of $1$ and $2$, the relation we need, written using the basis elements, is of the form
\begin{equation}
\label{derivation}
[a_1a_2,a_3] - t_1 \big( \, [a_1,a_3]a_2+[a_2,a_3]a_1 \, \big) - t_2 \big( \, [[a_1,a_3],a_2]+[[a_2,a_3],a_1] \, \big) .
\end{equation}
\item We should have a map of weight graded symmetric sequences
 \[
\rho\colon \calR\to \calT(\calX)\circ\calT(\calY)  
 \]
constrained by requiring that under the canonical projection $\calT(\calX)\circ\calT(\calY)\twoheadrightarrow\calT(\calX)$ the image of $\rho$ is sent to zero. The latter condition, together with the weight grading requirement, means that in fact we have a map 
 \[
\rho\colon \calR\to \calX\circ'\calY\oplus\calY\circ'\calY . 
 \]
The $S_3$-module of relations $\calR$ is generated by the element $(a_1a_2)a_3-a_1(a_2a_3)$. This element satisfies two symmetry conditions. First, it is skew-symmetric under the transposition of $1$ and $3$, leading to a defining relation of the form
 \[
\left\{ \;
\begin{array}{r@{\,}l}
(a_1a_2)a_3 - (a_2a_3)a_1 
&
{}
- t_3 \big( \, [a_1,a_2]a_3+[a_2,a_3]a_1 \, \big) 
\\[1pt]
&
{}
- t_4 \big( \, [[a_1,a_2],a_3]+[[a_2,a_3],a_1] \, \big) 
- t_5 [[a_1,a_3],a_2] ,
\end{array}
\right.
 \]
and second, the sum over the cyclic permutations of arguments is zero, meaning that we must have $t_3=0$. Finally, one can use Relation \eqref{Jacobi} to simplify the Lie monomial part, obtaining a relation of the form 
\begin{equation}
\label{associative}
(a_1a_2)a_3 - (a_2a_3)a_1 - t_3 [[a_1,a_3],a_2] 
\end{equation}
with just one parameter.
\end{enumerate}

\begin{lemma}
\label{rowspace}
The submodule of $\calT(\calX\oplus\calY)(3)$ generated by relations \eqref{Jacobi}--\eqref{associative}
is the row space of the following $7 \times 12$ matrix $R$ with entries in the polynomial ring $\mathbb{F}[t_1,t_2,t_3]$.
The rows are the coefficient vectors of the spanning relations, and the $(i,j)$ entry is the coefficient in relation $i$ of basis
monomial $j$ in the ordered basis:
\begin{equation}
\label{matrix1}
\left[
\begin{array}{r@{\;\;}r@{\;\;}r@{\;\;}r@{\;\;}r@{\;\;}r@{\;\;}r@{\;\;}r@{\;\;}r@{\;\;}r@{\;\;}r@{\;\;}r}
 0  & 0  & 0  & 0  & 0  & 0  & 0  & 0  & 0  & 1  & -1  & 1\\
 0  & 0  & 0  & 1  & 0  & 0  & 0  & -t_1  & -t_1  & 0  & -t_2  & -t_2 \\
 0  & 0  & 0  & 0  & 1  & 0  & -t_1  & 0  & t_1  & -t_2  & 0  & t_2 \\ 
 0  & 0  & 0  & 0  & 0  & 1  & t_1  & t_1  & 0  & t_2  & t_2  & 0 \\ 
 1  & 0  & -1  & 0  & 0  & 0  & 0  & 0  & 0  & 0  & -t_3  & 0 \\
-1  & 1  & 0  & 0  & 0  & 0  & 0  & 0  & 0  & 0  & 0  & t_3 \\ 
\end{array}
\right]
\end{equation}
\end{lemma}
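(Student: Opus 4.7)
The plan is to verify this lemma by direct, systematic computation. The statement amounts to unpacking \eqref{Jacobi}, \eqref{derivation}, \eqref{associative} together with their $S_3$-translates in the given ordered basis of twelve ternary monomials, then checking that the resulting coefficient vectors coincide with the rows of $R$.

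I would proceed relation by relation. For \eqref{Jacobi}, one checks directly that both $(1,2)$ and $(1,3)$ change the sign of the expression, so it spans a copy of the sign representation of $S_3$ and contributes a single row with nonzero entries only in columns 10--12; this is row~1 of $R$. For \eqref{derivation}, commutativity forces the monomial $[a_1a_2,a_3]$ to be invariant under $(1,2)$, so the relation is fixed (up to sign) by this transposition and its orbit has size~$3$. I would pick coset representatives for $\langle(1,2)\rangle\backslash S_3$, for instance $e$, $(2,3)$, $(1,3)$, apply them to \eqref{derivation}, and use the antisymmetry $[a_i,a_j]=-[a_j,a_i]$ to rewrite each result in the ordered basis. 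This produces the three rows whose leading nonzero entry sits in column~$4$, $5$, or $6$ of $R$.

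For \eqref{associative}, the commutative part $(a_1a_2)a_3-(a_2a_3)a_1$ is antisymmetric under $(1,3)$ (as already noted in part~(c) of the setup), so the stabilizer up to sign is $\langle(1,3)\rangle$ and the orbit again has size~$3$. Applying coset representatives and reducing yields the remaining rows. At this point it is important to verify the identity
\[
[[a_1,a_3],a_2]+[[a_2,a_1],a_3]+[[a_3,a_2],a_1]
= -\bigl([[a_1,a_2],a_3]-[[a_1,a_3],a_2]+[[a_2,a_3],a_1]\bigr),
\]
which shows that the cyclic sum of the three $S_3$-translates of \eqref{associative} equals $t_3$ times \eqref{Jacobi} and hence already lies in the row space; in particular the cyclic-vanishing constraint recorded in part~(c) does not impose any further relation beyond what is already visible in the matrix~\eqref{matrix1}.

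The main ``obstacle'' here is not conceptual but purely bookkeeping: every reduction of a monomial such as $[[a_i,a_j],a_k]$ with $i>j$ back into the canonical basis form must be tracked for signs, and the permutation action on the indices must be applied carefully. Once a small table of these reductions is laid out, each coset representative contributes a row which may be read off and matched against the explicit entries in~\eqref{matrix1}, completing the verification.
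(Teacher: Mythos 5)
Your proposal is correct and follows essentially the same route as the paper: exploit the symmetry of each relation (sign representation for the Jacobi identity, $(1,2)$-symmetry for the derivation-type relation, $(1,3)$-antisymmetry for the associativity-type relation) to cut the $S_3$-orbit down to the listed rows, then read off coefficient vectors after sign-straightening. Your explicit check that the cyclic sum of the three translates of \eqref{associative} equals $t_3$ times \eqref{Jacobi} is in fact a slightly more careful justification than the paper's terse remark that this relation ``generates a two-dimensional submodule,'' since it explains why two rows suffice only modulo the Jacobi row.
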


\begin{proof}
Relation \eqref{Jacobi} is skew-symmetric in all three arguments and so it is already a spanning set for the 
submodule it generates.
Relation \eqref{derivation} is symmetric in $a_1$ and $a_2$, and so it suffices to permute the arguments $a_1$, $a_2$, $a_3$ 
cyclically in order to obtain a spanning set for the submodule that it generates.
Relation \eqref{associative} generates a two-dimensional submodule, so it is enough to take one of its two cyclic permutations.
\end{proof}

\begin{proposition}
The reduced row echelon form of the matrix \eqref{matrix1} is
\begin{equation}
\label{4parameters}
\left[
\begin{array}{r@{\quad}r@{\quad}r@{\quad}r@{\quad}r@{\quad}r@{\quad}r@{\quad}r@{\quad}r@{\quad}r@{\quad}r@{\quad}r}
1 & 0 & -1 & 0 & 0 & 0 & 0 & 0 & 0 & 0 & -t_3 & 0
\\
0 & 1 & -1 & 0 & 0 & 0 & 0 & 0 & 0 & 0 & -t_3 & t_3
\\
0 & 0 & 0 & 1 & 0 & 0 & 0 & -t_1 & -t_1 & 0 & -t_2 & -t_2
\\
0 & 0 & 0 & 0 & 1 & 0 & -t_1 & 0 & t_1 & 0 & -t_2 & 2 t_2
\\
0 & 0 & 0 & 0 & 0 & 1 & t_1 & t_1 & 0 & 0 & 2 t_2 & -t_2
\\
0 & 0 & 0 & 0 & 0 & 0 & 0 & 0 & 0 & 1 & -1 & 1
\end{array}
\right]
\end{equation}
\end{proposition}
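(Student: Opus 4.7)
The proof is a direct Gaussian elimination on the matrix \eqref{matrix1}, carried out over the polynomial ring $\k[t_1,t_2,t_3]$. The point to notice first is that every leading nonzero entry of the six rows of \eqref{matrix1} is $\pm 1$, so no polynomial pivots occur and the elimination stays inside the polynomial ring without introducing denominators. This means the RREF is genuinely a statement about the row space as a submodule (not just after passing to the field of fractions).

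The plan is to read off the initial pivot positions: row $1$ (Jacobi) is supported on columns $10,11,12$ with leading $1$ in column $10$; rows $2,3,4$ (the three cyclic rotations of \eqref{derivation}) have leading $1$'s in columns $4,5,6$ respectively; row $5$ (the relation \eqref{associative}) has leading $1$ in column $1$; row $6$ (its cyclic rotation) has leading $-1$ also in column $1$. The rows already form an echelon pattern except for two overlaps, which are cleared by two elementary row operations:
\begin{enumerate}[leftmargin=1.75em,label=(\roman*)]
\item Replace row $6$ by row $6$ plus row $5$. This kills the $-1$ in column $1$ and produces a new row whose pivot sits in column $2$, namely
$(0,1,-1,0,0,0,0,0,0,0,-t_3,t_3)$.
\item Use the Jacobi pivot in column $10$ to clean column $10$ of rows $3$ and $4$: add $t_2$ times row $1$ to row $3$, and subtract $t_2$ times row $1$ from row $4$. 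These produce, respectively,
$(0,0,0,0,1,0,-t_1,0,t_1,0,-t_2,2t_2)$ and $(0,0,0,0,0,1,t_1,t_1,0,0,2t_2,-t_2)$.
\end{enumerate}
All remaining pivot columns ($1, 4, 5, 6, 10$ in the other rows) are already clear elsewhere. Reordering the six rows by ascending pivot column ($1,2,4,5,6,10$) gives exactly \eqref{4parameters}.

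The main obstacle is essentially nonexistent, which is itself the substantive observation: because every pivot is a unit in $\k$, the echelon reduction is valid over $\k[t_1,t_2,t_3]$ and produces a row space description whose specializations at any point $(t_1,t_2,t_3)\in\k^3$ are the row spaces of the specialized matrices. One just needs to be careful that after step (i) the row operation in step (ii) is applied to the \emph{original} rows $3$ and $4$ (whose entries in column $10$ are $\pm t_2$, not $0$), and that the entries in columns $11$ and $12$ combine correctly: $-t_2+t_2\cdot(-1)=-2t_2$ is not what happens — one gets $-t_2+t_2\cdot(-1)=-2t_2$ only if one miscopies signs, whereas the correct computation $-t_2+t_2\cdot(-1)$ is replaced by $-t_2 + t_2\cdot 0 = -t_2$ in column $11$ (since row $1$ has entry $-1$ in column $11$, adding $t_2$ times row $1$ yields $-t_2 + t_2(-1) = -2t_2$; but row $3$'s column $11$ entry is $0$, giving $0 + t_2(-1) = -t_2$). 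Tracking such bookkeeping is the only thing to verify, and it matches \eqref{4parameters}.
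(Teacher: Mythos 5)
Your row reduction is correct and verifies the stated matrix: adding row $5$ to row $6$ gives $(0,1,-1,0,\dots,0,-t_3,t_3)$, and clearing column $10$ of the two cyclic rotations of \eqref{derivation} against the Jacobi row produces exactly the entries $(0,-t_2,2t_2)$ and $(0,2t_2,-t_2)$ in columns $10$--$12$; since every pivot is $\pm 1$, the reduction is valid over $\k[t_1,t_2,t_3]$ and all pivot columns end up cleared both above and below, so the result is the reduced row echelon form. The paper states this proposition without proof (it is a routine computer-algebra verification), and your explicit elimination is the natural hand proof of the same fact; the only blemish is the final paragraph, whose discussion of the column $11$ bookkeeping is self-contradictory as written before the parenthetical settles on the correct values $0+t_2(-1)=-t_2$ for row $3$ and $t_2-t_2(-1)=2t_2$ for row $4$ --- you should simply delete the incorrect first half of that sentence.
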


\subsection{Cubic consequences of the quadratic relations}

We now consider the arity $4$ component of the relations determined by the row space of a matrix of the form 
\eqref{4parameters}. The component $\calT(\calX\oplus\calY)(4)$ has dimension 120, 
and an ordered monomial basis which may be described as follows.
For uniformity of notation, we temporarily write $a \circ b = ab$ and $a \bullet b = [a,b]$ for the commutative and
anticommutative operations respectively.
There are 96 monomials of association type 1, namely $( ( ( x_1 \ast_1 x_2 ) \ast_2 x_3 ) \ast_3 x_4).\sigma$ where $\sigma(1)<\sigma(2)$ and 
$\ast_1, \ast_2, \ast_3 \in \{ \circ, \bullet \}$; the order is lexicographic by the permutation $\sigma$, and then by the triple of operation symbols with $\circ \prec \bullet$.
There are 24 monomials of association type 2, namely $(( x_1 \ast_1 x_2 ) \ast_2 ( x_3 \ast_3 x_4 )).\sigma$ where $\sigma(1)<\sigma(2)$, $\sigma(3)<\sigma(4)$, and $\sigma(1) \prec \sigma(3)$; the order is lexicographic as before.

\begin{example}
The last eight monomials in association types 1 and 2 are as follows:
\[
\begin{array}{l@{\quad}l@{\quad}l@{\quad}l}
((x_3 x_4)x_2)x_1{,} &
([x_3,x_4]x_2)x_1{,} &
[x_3x_4{,}x_2]x_1{,} &
[[x_3,x_4],x_2]x_1{,} 
\\[1mm]
[(x_3x_4)x_2{,}x_1]{,} &
[[x_3,x_4]x_2{,}x_1]{,} &
[[x_3x_4{,}x_2]{,}x_1]{,} &
[[[x_3,x_4],x_2],x_1];
\\[1mm]
(x_1x_4)(x_2x_3){,} &
(x_1x_4)[x_2,x_3]{,} &
[x_1,x_4](x_2x_3){,} &
[x_1,x_4],[x_2,x_3]{,} 
\\[1mm]
[x_1x_4{,}x_2x_3]{,} &
[x_1x_4{,}[x_2,x_3]]{,} &
[[x_1,x_4]{,}x_2x_3]{,} &
[[x_1,x_4],[x_2,x_3]].
\end{array}
\]
\end{example}

To determine the arity $4$ component of the quotient of the free operad $\calT(\calX\oplus\calY)$ by the relations $\{r_i\}_{i=1,\ldots,6}$ we are considering, we should compute the quotient of $\calT(\calX\oplus\calY)(4)$ by the $S_4$-submodule generated by all partial compositions 
$r_i \circ_j \mu$,
$r_i \circ_j \lambda$,
$\mu \circ_k r_i$,
$\lambda \circ_k r_i$,
where $\mu, \lambda$ represent the commutative and anticommutative operations respectively, and the limits on the indices are
$1 \le i \le 6$, $1 \le j \le 3$, and $1 \le k \le 2$.
Since $\mu$ and $\lambda$ generate one-dimensional $S_2$-modules, we need to include only the cases $k=1$, namely  
$\mu \circ_1 r_i$,
$\lambda \circ_1 r_i$.
Altogether we obtain a set of 48 partial compositions that are consequences of our relations, and after applying all 24 permutations of the arguments, we have a spanning set of 1152 elements for the space of all arity 4 consequences of our relations. Note that the monomials of arity 4 which occur in the elements we constructed may require \emph{straightening} in order to belong to the set of basis monomials above. In other words, in some cases we must replace expressions of the form $f \circ g$ and $f \bullet g$ by $g \circ f$ and $-(g \bullet f)$ respectively.

\begin{example}
Row 1 of matrix \eqref{4parameters} represents the relation
\[
r_1(x_1,x_2,x_3) = 
(x_1x_2)x_3
- (x_2x_3)x_1
- t_3 [[x_1,x_3],x_2].
\]
The partial composition $r_1\circ_3\lambda$ amounts to replacing $x_3$ by $[x_3,x_4]$:
\[
r_1 \circ_3 \lambda = (x_1x_2)[x_3,x_4]
- (x_2[x_3,x_4])x_1
- t_3 [[x_1,[x_3,x_4]],x_2].
\]
Straightening the monomials in this relation expresses $r_1 \circ_3 \lambda$ as a linear combination of the ordered monomial basis:
\[
r_1 \circ_3 \lambda  = 
(x_1x_2)[x_3,x_4]
- ([x_3,x_4]x_2)x_1
+ t_3 [[[x_3,x_4],x_1],x_2].
\]
\end{example}

\subsection{Statement and proof of the classification theorem}

Our main result is the following classification theorem mentioned in the introduction. 

\begin{theorem}
The only operads obtained from the symmetric operads $\Com$ and $\Lie$ by an inhomogeneous distributive law 
are defined by the following relations:
\begin{align}
\label{relations1}
&
\left\{ \quad
\begin{array}{l}
(x_1x_2)x_3 - x_1(x_2x_3) = 0,
\\ {}
[x_1x_2,x_3] = 0,
\\ {}
[[x_1,x_2],x_3] + [[x_2,x_3],x_1] + [[x_3,x_1],x_2] = 0.
\end{array}
\right.
\\[1mm]
\label{relations2}
&
\left\{ \quad
\begin{array}{l}
(x_1x_2)x_3 - x_1(x_2x_3) - q [[x_1,x_3],x_2] = 0 \;\; (q \in \k),
\\ {}
[x_1x_2,x_3] - [x_1,x_3]x_2 - x_1[x_2,x_3] = 0,
\\ {}
[[x_1,x_2],x_3] + [[x_2,x_3],x_1] + [[x_3,x_1],x_2] = 0.
\end{array}
\right.
\end{align}
To classify such operads up to isomorphism, one has to replace ``$q \in \k$'' by ``$q \in \k/(\k^\times)^2$''.
\end{theorem}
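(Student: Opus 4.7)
The plan is to apply Proposition \ref{prop:weight3}: both $\Com$ and $\Lie$ are quadratic, so it is enough to verify that the canonical surjection $\eta\colon \Com\circ\Lie\to\calO$ is an isomorphism in arity $4$. As symmetric sequences, $\Com\circ\Lie\cong\Poisson$, hence $\dim(\Com\circ\Lie)(4)=24$, while $\dim\calT(\calX\oplus\calY)(4)=120$. Consequently, $\eta$ is an arity-$4$ isomorphism if and only if the $S_4$-submodule of $\calT(\calX\oplus\calY)(4)$ generated by all arity $4$ consequences of the six relations encoded by~\eqref{4parameters} has dimension exactly $96$.

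To set up the computation, I would assemble the $1152\times 120$ matrix $M=M(t_1,t_2,t_3)$ with entries in $\k[t_1,t_2,t_3]$ whose rows are the coefficient vectors, expressed in the ordered arity $4$ monomial basis, of the partial compositions $r_i\circ_j\mu$, $r_i\circ_j\lambda$, $\mu\circ_1 r_i$, $\lambda\circ_1 r_i$ after straightening, followed by each of the $24$ permutations of the arguments. The classification problem then becomes: determine the locus $V\subset\mathrm{Spec}\,\k[t_1,t_2,t_3]$ on which $\mathrm{rank}\,M(t_1,t_2,t_3)=96$, that is, the vanishing locus of the ideal $I$ generated by all $97\times 97$ minors of $M$.

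The principal obstacle is computational: the naive ideal of maximal minors is far too large to manipulate directly. To make the problem tractable I would exploit the $S_4$-equivariance, decomposing $\calT(\calX\oplus\calY)(4)$ into isotypic components for the five irreducible $S_4$-representations; Schur's lemma block-diagonalises $M$, and each block can be treated independently. Equivalently, one may work with shuffle operads, whose monomial basis is indexed by shuffle trees and where the $S_4$-symmetry is implicit; this is the input form best suited to operadic Gr\"obner basis software. In each isotypic block I would record the conditions on $(t_1,t_2,t_3)$ required for its contribution to $\mathrm{rank}\,M$ to attain the value compatible with the total target rank $96$, and combine these into a single ideal $I\subset\k[t_1,t_2,t_3]$.

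Finally, a Gr\"obner basis computation for $I$ together with a primary decomposition of $V(I)$ should reveal precisely two irreducible components: the isolated point $(t_1,t_2,t_3)=(0,0,0)$, giving the trivial inhomogeneous distributive law with relations~\eqref{relations1}, and the affine line $\{(1,0,q):q\in\k\}$, giving the Livernet--Loday family with relations~\eqref{relations2}. For the refinement up to isomorphism, note that the rescaling $[a,b]\mapsto\alpha[a,b]$ with $\alpha\in\k^\times$ preserves Jacobi and the derivation relation but sends the parameter $q$ in~\eqref{relations2} to $q/\alpha^2$, so the isomorphism classes within the second family are parametrised by $q\in\k/(\k^\times)^2$.
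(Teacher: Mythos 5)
Your overall architecture matches the paper's: reduce to arity $4$ via Proposition \ref{prop:weight3}, form the $1152\times 120$ matrix $M$ over $\k[t_1,t_2,t_3]$, translate ``$\eta$ is an isomorphism in arity $4$'' into ``$\rank M=96$'' (using $\dim(\Com\circ\Lie)(4)=\dim\Poisson(4)=24$), and cut out the resulting locus, which is the point $(0,0,0)$ and the line $(1,0,q)$. Where you genuinely diverge is in how the rank condition is converted into polynomial equations. You propose the determinantal ideal of $97\times 97$ minors, tamed by decomposing $\calT(\calX\oplus\calY)(4)$ into $S_4$-isotypic blocks (or passing to shuffle operads). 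The paper instead computes a \emph{partial Smith form}: using the many $\pm 1$ entries of $M$ as pivots, unimodular row and column operations over $\k[t_1,t_2,t_3]$ reduce $M$ to a block matrix with an identity block $I_{96}$ and a residual $1056\times 24$ block $L'$, so that $\rank M(t)=96+\rank L'(t)$ and the condition $\rank M=96$ becomes simply the vanishing of all entries of $L'$. This replaces an enormous ideal of minors by an ideal generated by the ($56$ monic, degree $2$ and $3$) entries of a small matrix, whose reduced Gr\"obner basis is $t_2$, $t_3(t_1-1)$, $t_1(t_1-1)$. Your equivariant route is workable and standard in this genre, but it is heavier: you must first compute the $S_4$-module structure of $\Poisson(4)$ to know the target rank of each isotypic block, and within each block you still face a minors computation. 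Both approaches rely on the same prior observation, which you state only implicitly: since every choice of $(t_1,t_2,t_3)$ yields an inhomogeneous distributive \emph{rewriting rule}, $\eta$ is automatically surjective, so $\rank M\ge 96$ everywhere and ``$\rank\le 96$'' (the minor/entry conditions) is equivalent to ``$\rank=96$''.

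Two small points to tighten. First, a proof by computation must actually report the computation's output; your final paragraph asserts the answer rather than derives it, which is unavoidable in outline but should be flagged as the content of the proof. Second, for the classification up to isomorphism you correctly compute that rescaling the bracket by $\alpha$ sends $q$ to $q/\alpha^2$, but to conclude that the classes are exactly $\k/(\k^\times)^2$ you must also argue that \emph{every} isomorphism is of this form: since the symmetric and antisymmetric generators span non-isomorphic $\k S_2$-modules, any isomorphism of operads scales each separately, and a simultaneous rescaling leaves all (weight-homogeneous) relations unchanged, so only the relative scalar $\alpha$ acts on $q$.
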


\begin{proof}
By Proposition \ref{prop:weight3}, it is enough to check that the map $\eta$ is an isomorphism when restricted to cubic elements, which in our case means elements of arity $4$, since our operads are generated by binary operations.

The 1152 elements spanning the space of all arity 4 consequences of our relations give us a $1152 \times 120$-matrix $M$ which has entries in the polynomial ring $\k[t_1,t_2,t_3]$, which we equip with the \textsl{deglex} (\textsl{tdeg} in \texttt{Maple}) monomial order
$t_1 \succ t_2 \succ t_3$.

Since $\k[t_1,t_2,t_3]$ is not a PID, the matrix $M$ has no Smith form, but since many entries of $M$ are $\pm 1$, we can compute a partial Smith form; see \cite[Chapter 8]{BD2016} and \cite{BD2017}. The result is a block matrix 
 \[
\begin{pmatrix}
I_{96} & 0_{96\times 24}\\
0_{1056\times 96} & L'
\end{pmatrix} , 
 \]
where the lower right block $L'$ of size $1056 \times 24$ has many zero rows. (This calculation took less than 25 seconds using Maple 18 on a MacBook Pro.) Deleting the zero rows, we obtain a matrix $L$ of size $372 \times 24$ which contains $126$ distinct elements of $\k[t_1,t_2,t_3]$.
We replace each of these elements by its monic form and obtain a set $S$ of $56$ distinct elements of degrees $2$ and~$3$. Finally, we compute the reduced Gr\"obner basis for the ideal $I$ generated by $S$ and obtain the set 
 \[
t_2, \quad t_3(t_1-1), \quad t_1(t_1-1).
 \]
The zero set of these consists of the point $(0,0,0)$ and the line $(1,0,t_3)$. This proves the first statement of the theorem.

To establish the classification up to isomorphism, we note that a hypothetical isomorphism between two such operads $\calO$ and $\calO'$ must send the symmetric generator of $\calO$ into a nonzero scalar multiple of the symmetric generator of $\calO'$, and the anti-symmetric generator of $\calO$ into a nonzero scalar multiple of the anti-symmetric generator of $\calO'$. The simultaneous rescaling by the same factor does not change the relations, so we may assume that the first scalar is equal to one. The classification result follows, since such an isomorphism multiplies $q$ by a nonzero square.
\end{proof}

We remark that the operad defined by~\eqref{relations1} corresponds to the trivial distributive law between $\Com$ and $\Lie$ \cite[Sec.~8.6.4]{LV2012}, and the operad defined by~\eqref{relations2} is the Livernet--Loday deformation of $\Poisson$ into $\Ass$ over a field of characteristic zero~\cite{MR2006}. In fact, over any field $\k$ of characteristic different from two, the formula $\phi(x_1\star x_2):=x_1x_2+[x_1,x_2]$ defines an isomorphism from the operad $\Ass$ to the operad defined by~\eqref{relations2} for $q=1$. Over a field $\k$ of characteristic two, an operad from that family cannot be isomorphic to the associative operad, as already the space of binary operations is a different $\k S_2$-module. 

As a concluding remark, we would like to note that even the existence of one non-trivial inhomogeneous distributive law should be regarded as a miracle of a sort. For instance, if we replace the operad of Lie algebras by the operad $\NLie_2$ of two-step nilpotent Lie algebras with the defining relations $[[x_1,x_2],x_3]=0$, a computation similar to ours shows that the only inhomogeneous distributive law between that operad and the operad $\Com$ is the trivial one. In particular, the Leibniz rule does not give a distributive law, as was noted by the second author many years ago, see \cite[Exercise~8.10.12]{LV2012}.


\end{document}